\documentclass[conference]{IEEEtran}
\IEEEoverridecommandlockouts

\usepackage{amsmath}
\usepackage{amssymb}
\usepackage{amsthm}
\usepackage{graphicx}
\usepackage{cite}
\usepackage{xcolor}
\usepackage{eso-pic}

\newtheorem{theorem}{Theorem}[section]
\newtheorem{proposition}{Proposition}[section]
\theoremstyle{remark}
\newtheorem{remark}{Remark}[section]

\newcommand{\bsig}{\boldsymbol{\sigma}}
\newcommand{\bomega}{\boldsymbol{\omega}}
\newcommand{\bh}{\mathbf{h}}
\newcommand{\bu}{\mathbf{u}}
\newcommand{\bH}{\mathbf{H}}
\newcommand{\bJ}{\mathbf{J}}
\newcommand{\bv}{\mathbf{v}}
\newcommand{\bz}{\mathbf{z}}
\newcommand{\bI}{\mathbf{I}}
\newcommand{\bR}{\mathbf{R}}
\newcommand{\bD}{\mathbf{D}}
\newcommand{\bxi}{\boldsymbol{\xi}}
\newcommand{\bzero}{\mathbf{0}}
\DeclareMathOperator{\diag}{diag}
\DeclareMathOperator{\sign}{sign}

\title{Explicit Globally Asymptotically Stable Attitude Control
with Reaction Wheel Angular Momentum and Torque Constraints}

\author{
  \IEEEauthorblockN{Javier Urrios, Rafael Vazquez and Guillermo Pacheco-Ramos}

  \thanks{J. Urrios, R. Vazquez and G. Pacheco-Ramos are with the Department
  of Aerospace Engineering, Universidad de Sevilla, Camino de los
  Descubrimientos s.n., 41092 Sevilla, Spain. Emails: \texttt{furrios@us.es}, \texttt{rvazquez1@us.es}, \texttt{gpacheco@us.es}}
}

\usepackage[top=54pt, bottom=54pt, left=54pt, right=54pt]{geometry}

\begin{document}
\newgeometry{top=72pt, bottom=54pt, left=54pt, right=54pt}
\maketitle

\begin{abstract}
We present an explicit Lyapunov attitude controller for rigid spacecraft actuated by three orthogonally mounted reaction wheels, parameterized using Modified Rodrigues Parameters (MRPs), which enforces hard bounds on wheel angular momentum and applied torque at every instant without solving any online optimization.  A barrier-inspired momentum feedback prevents wheel saturation; a continuous positive scalar uniformly scales the feedback whenever a torque limit would otherwise be breached, preserving Lyapunov convergence.  For the regulation problem, global asymptotic stability (GAS) is established via LaSalle's invariance principle, and constraint satisfaction is proved analytically for arbitrary symmetric positive-definite inertia matrices.  Replacing the barrier by an error-state counterpart extends the design to tracking, with asymptotic (though no longer global) stability and the same safety guarantees under explicit feasibility conditions on the reference.  Simulations confirm convergence and constraint satisfaction, and compare performance against other controllers.
\end{abstract}

\begin{IEEEkeywords}
attitude control, reaction wheels, constrained control, Lyapunov methods, modified Rodrigues parameters, spacecraft control
\end{IEEEkeywords}
\section{Introduction}

Spacecraft attitude control with reaction wheels faces a fundamental constraint-handling challenge: achieving GAS while rigorously enforcing hard limits on wheel momentum $|h_i|\leq h_{\max}$ (beyond which saturation causes loss of control authority) and applied torque $|u_i|\leq u_{\max}$ (imposed by motor amplifiers).  Violating either constraint can result in mission loss. Classical Lyapunov designs~\cite{schaub1996stereo,tsiotras1996stabilization,tsiotras1998further,wen1991attitude} are explicit and globally stabilizing but assume unbounded actuators, delegating momentum management to auxiliary systems.  No existing approach simultaneously achieves \emph{explicit} feedback (no online optimization), \emph{GAS}, and \emph{hard constraint satisfaction} for both $h_{\max}$ and $u_{\max}$; existing solutions cluster into families sacrificing at least one property.

\smallskip
\noindent\textbf{Saturated torque control (Explicit~$+$~GAS, torque only).}  Su \& Zheng~\cite{su2011saturated,su2016unified,su2020single} proved that component-wise torque saturation preserves GAS, with extensions to disturbances~\cite{wallsgrove2005saturated} and backstepping~\cite{ali2010backstepping}; wheel momentum dynamics are ignored.

\smallskip
\noindent\textbf{CLF-CBF with QP (hard constraints, not explicit).}  These methods~\cite{ames2019cbf,shahraki2025cbf,wu2022attitude} blend a Control Lyapunov Function (CLF, for convergence) with Control Barrier Functions (CBFs, for momentum and torque bounds) in a Quadratic Program (QP) solved at every cycle: the constraints are handled rigorously, but the CLF row must be relaxed to keep the QP feasible, so convergence is promoted rather than guaranteed, and the online-optimization cost is unsuitable for resource-constrained platforms.

\smallskip
\noindent\textbf{BLFs (Explicit~$+$~hard constraints, not GAS).}  Barrier Lyapunov Functions (BLFs)~\cite{tee2009barrier,zhang2024finite} create repulsive potentials near state constraint boundaries, yielding explicit laws, but stability is only semi-global and, to the authors' knowledge, no BLF design on reaction wheel constraints has been shown to be GAS. This lack of global stability is primarily attributed to the vanishing of the descent property of the Lyapunov function in the vicinity of the constraints.

\smallskip
\noindent\textbf{ERGs (Explicit~$+$~hard constraints, conditional).}  Explicit Reference Governors (ERGs)~\cite{nicotra2020explicit,nakano2023explicit,dang2022explicit} pre-compute admissible sets offline for fast online lookups, but the 9 spacecraft-plus-wheels states cause exponential complexity and convergence is guaranteed only to the nearest admissible reference.

\smallskip
This paper presents an explicit attitude control law enforcing hard momentum and torque constraints at every instant, with GAS guaranteed for the regulation problem.  The design combines a barrier-inspired momentum feedback (a diagonal matrix $\bD$ amplifying damping near saturation) with a scalar $s$ preventing torque violations; constraint satisfaction is proved analytically for arbitrary inertia matrices.  The controller requires $\mathcal{O}(1)$ operations per cycle (a $200$x speedup over CLF-CBF-QP baselines) and offers no solver failure modes, deterministic execution time, and straightforward flight-software certification.  An extension to tracking establishes asymptotic (though not global) stability under explicit, a-priori checkable conditions, and simulations validate convergence and safety under near-saturated conditions.

A GAS result is attainable because the constraints act on the actuator states and forbid no attitude; the precise meaning of GAS is discussed after Theorem~\ref{thm:stability}.

The paper is organized as follows: Section~\ref{sec:dynamics} introduces the dynamics, Section~\ref{sec:ctrl} the regulation law and its stability and safety proofs, Section~\ref{sec:tracking} the tracking extension, Section~\ref{sec:sim} simulations, and Section~\ref{sec:conclusion} conclusions.

\clearpage
\restoregeometry

\section{Attitude Dynamics}
\label{sec:dynamics}

\subsection{Problem Statement}

Consider a rigid body with three identical orthogonally mounted reaction wheels, not necessarily aligned with the principal inertia axes. Each wheel has a maximum angular momentum $h_{\max}$, and a maximum torque $u_{\max}$. The control objective is to stabilize the attitude to a desired equilibrium while respecting the maximum angular momentum and torque limits at all times. No disturbances or model uncertainties are considered, and it is assumed that the full state (attitude and angular velocity) and the inertia are available for feedback. The body frame is defined such that each reaction wheel spins about each body axis, $\mathbf{e}_1$, $\mathbf{e}_2$, and $\mathbf{e}_3$.

\subsection{Attitude Representation}

Attitude is represented by Modified Rodrigues Parameters (MRPs) $\bsig\in\mathbb{R}^3$~\cite{shuster1993survey}, limited to $\|\bsig\|^2\leq 1$ to avoid the kinematic singularity. Whenever $\|\bsig\|^2=1$ the shadow-set switch $\bsig\leftarrow-\bsig/\|\bsig\|^2$ is applied.  Kinematics read
\begin{equation}
  \dot{\bsig} = C(\bsig)\,\bomega, \quad
  C(\bsig) = \tfrac{1}{4}\!\left[
    (1-\|\bsig\|^2)\bI_n + 2\bsig^{\times} + 2\bsig\bsig^T
  \right],
  \label{eq:kinematics}
\end{equation}
with $\bI_n$ the $n\times n$ identity matrix, $\bsig^{\times}$ the skew-symmetric matrix of $\bsig$, and $\bomega$ the angular velocity in the body frame.

\subsection{Dynamics and Conservation Law}

Since the wheels are aligned with the body axes, each component $h_i$ of their total angular momentum vector $\bh=(h_1,h_2,h_3)^T$ is the angular momentum of the $i$-th wheel.  The Euler--Poinsot equations with wheel actuation are
\begin{align}
  \bJ\dot{\bomega} + \bomega\times(\bJ\bomega+\bh) &= -\bu,
  \label{eq:euler} \\
  \dot{\bh} &= \bu,
  \label{eq:wheels}
\end{align}
where $\bJ=\bJ^T\succ 0$ is the body inertia matrix and $\bu$ the control torque vector. System~\eqref{eq:euler}--\eqref{eq:wheels} conserves the total angular momentum in inertial coordinates:
\begin{equation}
  \bJ\bomega + \bh = \bH = \bR(\bsig)\,\bH_0,
  \label{eq:conservation}
\end{equation}
where $\bR(\bsig)$ is the rotation matrix (body to inertial) and $\bH_0$ is the initial inertial angular momentum with $H_0\triangleq\|\bH_0\|$.

\smallskip
\noindent\textbf{Assumption~(A1)}: $H_0 < h_{\max}$.

This ensures $|H_i|\leq H_0 < h_{\max}$, so any attitude can be held stationary with all wheels within the momentum bound.  Note also that, although the plant \eqref{eq:kinematics}--\eqref{eq:wheels} evolves in a 9D space, \eqref{eq:conservation} slaves $\bh=\bR(\bsig)\bH_0-\bJ\bomega$ to $(\bsig,\bomega)$ along every trajectory: the closed-loop state is $(\bsig,\bomega)\in\mathbb{R}^3\times\mathbb{R}^3$, and all stability claims below refer to this six-dimensional space.

\section{Feedback Control Law}
\label{sec:ctrl}

\subsection{Lyapunov Function and Regulation Control Law}

Consider the Lyapunov function
\begin{equation} \label{eq:lyap_reg}
  V \triangleq 2k_p\log(1+\|\bsig\|^2) + \tfrac{1}{2}\bomega^T\bJ\bomega,
\end{equation}
with
\begin{equation} \label{eq:kp_bound}
  k_p \in \!\left(0,\;\frac{u_{\max}}{\sqrt{1+R_{\max}}}\right),
  \quad R_{\max} = \max_{i\neq j}\!\left(\frac{J_{ij}}{J_{ii}}\right)^{\!2}.
\end{equation}
Here $k_p$ acts as a proportional (attitude) gain and $k_d$, introduced below, as a damping gain; the bound in \eqref{eq:kp_bound} reserves torque authority for the damping term to dominate near momentum saturation despite inertia cross-couplings, as the proof of Theorem~\ref{thm:safety} makes precise.  Using the MRP identity $\bsig^T C(\bsig)=\tfrac{1}{4}(1+\|\bsig\|^2)\bsig^T$ and $\bomega^T(\bomega\times\cdot)=0$, the time derivative of $V$ along \eqref{eq:kinematics}--\eqref{eq:euler} is
\begin{equation}
  \dot{V} = \bomega^T(k_p\bsig - \bu).
  \label{eq:Vdot}
\end{equation}
Define the positive-diagonal matrix encoding the proximity of each wheel to its momentum limit,
\begin{equation}
  \bD \triangleq \diag\!\left(
    \tfrac{1}{|h_{\max}^2-h_1^2|},\;
    \tfrac{1}{|h_{\max}^2-h_2^2|},\;
    \tfrac{1}{|h_{\max}^2-h_3^2|}
  \right),
  \label{eq:D}
\end{equation}
the momentum-feedback vector
\begin{equation}
  \bz \triangleq -k_d\,\bJ\bD\bJ\,\bomega, \quad k_d>0,
  \label{eq:w}
\end{equation}
and, for each $i\in\{1,2,3\}$ with $z_i\neq 0$,
\begin{equation}
  s_i \triangleq \frac{u_{\max} + \sign(z_i)\,k_p\sigma_i}{|z_i|},
  \label{eq:si}
\end{equation}
with $s_i\triangleq+\infty$ when $z_i=0$. Set $s\triangleq\min(1,s_1,s_2,s_3)$.  The control law is
\begin{equation}
  \bu = k_p\bsig - s\,\bz.
  \label{eq:ctrl}
\end{equation}

\begin{remark}[Well-posedness and role of $s$]
\label{rem:s}
Since $\|\bsig\|\leq 1$, the numerator of \eqref{eq:si} satisfies $u_{\max}+\sign(z_i)k_p\sigma_i\geq u_{\max}-k_p>0$ by \eqref{eq:kp_bound}, so $s_i>0$ and $s\in(0,1]$.  The growth of $s_i$ as $z_i\to 0$ causes no singularity: whenever $|z_i|<u_{\max}-k_p$, $s_i>1$ drops out of the minimum ($s=1$ when $\bz=\bzero$), so $s$ is locally a minimum of finitely many locally Lipschitz functions, $\bu$ is locally Lipschitz wherever $|h_i|\neq h_{\max}$ for all $i$, and closed-loop solutions are unique.  Moreover, $s$ scales only the damping term $-\bz$: any $s\in(0,1]$ preserves $\dot V\leq 0$ in \eqref{eq:Vdot2} below, affecting only the convergence rate.
\end{remark}

\subsection{Stability}

\begin{theorem}[Global asymptotic stability]
  \label{thm:stability}
  Consider model \eqref{eq:kinematics}--\eqref{eq:wheels} under control~\eqref{eq:ctrl} with coefficients $k_d>0$ and $k_p$ verifying \eqref{eq:kp_bound}. Under Assumption~(A1), the equilibrium
  $(\bsig,\bomega)=(\bzero,\bzero)$ is GAS.
\end{theorem}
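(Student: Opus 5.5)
Substituting the control law \eqref{eq:ctrl} into \eqref{eq:Vdot} gives
\begin{equation*}
  \dot V = \bomega^T(k_p\bsig - k_p\bsig + s\bz) = -s\,k_d\,(\bJ\bomega)^T\bD(\bJ\bomega) = -s\,k_d\sum_{i=1}^3 \frac{(\bJ\bomega)_i^2}{|h_{\max}^2-h_i^2|}\leq 0 .
\end{equation*}
This uses $s\in(0,1]$ from Remark~\ref{rem:s} and the positivity of $\bD$. The Lyapunov function $V$ in \eqref{eq:lyap_reg} is positive definite and radially unbounded on the closed-loop state space $(\bsig,\bomega)$. Indeed, $\bJ\succ0$, and $\log(1+\|\bsig\|^2)$ is positive definite on the MRP ball $\|\bsig\|\le1$. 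Because of the shadow-set switch, the attitude coordinate stays in a compact set, so radial unboundedness is only needed in $\bomega$. I also have to check that the shadow switch does not increase $V$. At $\|\bsig\|=1$ the map $\bsig\mapsto-\bsig/\|\bsig\|^2$ preserves the norm, so $V$ is continuous across the switch. Sublevel sets of $V$ are therefore compact and forward invariant, and trajectories are bounded.

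Next I need solutions to exist for all forward time, i.e.\ that $|h_i|$ never reaches $h_{\max}$, where $\bD$ blows up. For this I intend to use Assumption~(A1) together with the barrier structure of $\bD$. The forward invariance of $|h_i|<h_{\max}$ is the content of the companion safety result (Theorem~\ref{thm:safety}), which is stated later but is proved independently. If I cannot invoke it, I will argue directly. As $|h_i|\to h_{\max}$, the term $D_{ii}$ dominates $\bz$, so $s z_i$ dominates $k_p\sigma_i$ thanks to the bound \eqref{eq:kp_bound}; with $R_{\max}$ handling cross-inertia terms, this forces $\dot h_i = u_i$ to have sign opposite to $h_i$. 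I expect this step to be the main obstacle. The difficulty is controlling the sign of $u_i$ near the boundary when $\bJ$ is non-diagonal and $s<1$ is active; the choice of $s_i$ in \eqref{eq:si}, which keeps $|u_i|\le u_{\max}$ with the active component at the limit in the damping direction, is what I would exploit. Once this holds, $\bD$ is bounded on each trajectory, since $\bh=\bR(\bsig)\bH_0-\bJ\bomega$ remains in a compact subset of the open box. Moreover $s$ is bounded below by a positive constant on that compact set, because its numerators are at least $u_{\max}-k_p>0$ and $|z_i|$ is bounded.

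Finally I apply LaSalle's invariance principle on the compact positively invariant sublevel set. On the largest invariant set contained in $\{\dot V=0\}$ we have $s>0$ and $\bD\succ0$, so $\bJ\bomega\equiv0$ and hence $\bomega\equiv0$. Then $\dot{\bomega}\equiv0$, and since $\bz=\bzero$ we get $s=1$ and $\bu=k_p\bsig$. Equation \eqref{eq:euler} then gives $0=-\bu=-k_p\bsig$, so $\bsig=\bzero$. Every trajectory therefore converges to the origin, and Lyapunov stability follows from $\dot V\le0$ with $V$ positive definite. Together these give GAS on the six-dimensional state $(\bsig,\bomega)$, where $\bh$ is slaved to $(\bsig,\bomega)$ through \eqref{eq:conservation}.
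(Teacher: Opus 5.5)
Your proposal is correct and follows the paper's proof. Both arguments substitute the control law to get $\dot V=-s\,k_d\,\bomega^T\bJ\bD\bJ\,\bomega\le 0$, use compact positively invariant sublevel sets of $V$, and apply LaSalle via $\bomega\equiv\bzero\Rightarrow\bz=\bzero\Rightarrow k_p\bsig=\bzero$.

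Your extra care about forward completeness (invoking Theorem~\ref{thm:safety} so that $\bD$ stays finite, and checking continuity of $V$ across the shadow switch) goes beyond what the paper writes. Note, however, that $|h_i(t)|<h_{\max}$ for every finite $t$ does not by itself give a uniform margin from the boundary as $t\to\infty$, so your ``compact subset of the open box'' claim needs a short extra argument, e.g.\ a uniform positive lower bound on $-\dot V$ near the boundary combined with $|\dot h_i|\le u_{\max}$.
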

\begin{proof}
  Substituting \eqref{eq:ctrl} into \eqref{eq:Vdot},
  \begin{equation}
    \dot{V} = s\,\bomega^T\bz =
    -s\,k_d\,\bomega^T\bJ\bD\bJ\,\bomega \leq 0,
    \label{eq:Vdot2}
  \end{equation}
  since $s>0$, $k_d>0$, $\bJ\bD\bJ\succ 0$; equality holds if and only if $\bomega=\bzero$.  Hence $\dot V$ vanishes exactly on $E\triangleq\{(\bsig,\bomega):\bomega=\bzero\}$.  Let $\mathcal{M}$ be the largest invariant set contained in $E$.  Any solution remaining in $\mathcal{M}$ satisfies $\bomega\equiv\bzero$, hence $\dot\bomega\equiv\bzero$ and, by \eqref{eq:w}, $\bz\equiv\bzero$; then \eqref{eq:euler} and \eqref{eq:ctrl} give $\bzero=\bu=k_p\bsig$, so $\bsig=\bzero$.  Thus $\mathcal{M}=\{(\bzero,\bzero)\}$ and, since $V$ is radially unbounded (its sublevel sets are compact and positively invariant), LaSalle's invariance principle guarantees that every trajectory converges to $(\bzero,\bzero)$; together with Lyapunov stability (from $\dot V\leq 0$ and positive definiteness of $V$), this gives GAS.
\end{proof}

At the shadow-set switch $\bsig\leftarrow-\bsig/\|\bsig\|^2$ (applied at $\|\bsig\|^2=1$), $V$ is continuous and $\dot{V}=s\,\bomega^T\bz$ does not involve $\bsig$, so the Lyapunov argument is unaffected; the control, however, is discontinuous there, which may cause chattering. Some discontinuity is unavoidable: no continuous time-invariant feedback globally stabilizes the rigid-body attitude~\cite{bhat2000topological}, so the GAS claim of Theorem~\ref{thm:stability} is meant in MRP coordinates under this (discontinuous) shadow-set switching. Quaternions avoid this at the cost of unwinding, whereas MRPs are minimal and confine the discontinuity to $\|\bsig\|=1$, the $180^\circ$ rotations farthest from the target.

\subsection{Safety}

\begin{theorem}[Constraint satisfaction]
  \label{thm:safety}
  Consider model \eqref{eq:kinematics}--\eqref{eq:wheels} under control~\eqref{eq:ctrl} with coefficients $k_d>0$ and $k_p$ verifying \eqref{eq:kp_bound}. Under Assumption~(A1), it holds that $|u_i(t)|\leq u_{\max}$ and $|h_i(t)|<h_{\max}$ $\forall t\geq 0$, $i\in\{1,2,3\}$.
\end{theorem}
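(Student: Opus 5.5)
Torque bound first, since it is algebraic. Fix $i$ and write $u_i = k_p\sigma_i - s z_i$. If $z_i=0$ then $|u_i|=k_p|\sigma_i|\le k_p<u_{\max}$ by \eqref{eq:kp_bound}, because $\|\bsig\|\le 1$. If $z_i\neq 0$, put $\epsilon=\sign(z_i)$. Then $-\epsilon u_i = s|z_i|-\epsilon k_p\sigma_i$. The choice $s\le s_i$ gives $s|z_i|\le u_{\max}+\epsilon k_p\sigma_i$, so $-\epsilon u_i\le u_{\max}$. In the other direction, $-\epsilon u_i\ge -\epsilon k_p\sigma_i\ge -k_p>-u_{\max}$. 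Together these give $|u_i|\le u_{\max}$ for as long as the solution exists, that is, while $|h_i|<h_{\max}$ so that $\bD$ is finite and well defined (Remark~\ref{rem:s}).

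Momentum bound: argue by contradiction with a first exit time. At $t=0$ we need $|h_i(0)|<h_{\max}$; I would take this as implicit in the setting. Suppose $T$ is the first time at which $|h_j(T)|=h_{\max}$ for some $j$, with $h_j\to h_{\max}$ (the case $-h_{\max}$ is symmetric). On $[0,T)$, Theorem~\ref{thm:stability} gives $V(t)\le V(0)$, so $\bomega$ is bounded. The conservation law \eqref{eq:conservation} then gives $(\bJ\bomega)_j=H_j-h_j\to H_j-h_{\max}\le H_0-h_{\max}<0$ by (A1). Hence $(\bJ\bomega)_j\le -\delta<0$ near $T$, with $\delta=\tfrac12(h_{\max}-H_0)$.

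Next I show that $u_j=\dot h_j<0$ near $T$, which contradicts $h_j$ increasing to $h_{\max}$. Write $\bv=\bJ\bomega$, so $\bz=-k_d\bJ\bD\bv$ and $z_j=-k_d\sum_k J_{jk}D_{kk}v_k$. As $t\to T$, $D_{jj}\to\infty$. The troublesome case is when other wheels approach saturation at the same instant, so that other $D_{kk}$ blow up too; this is the main obstacle, and the cross-inertia bound $R_{\max}$ in \eqref{eq:kp_bound} must be what handles it. My first attempt is the following. The indices with $|h_k|\to h_{\max}$ all have $\sign(v_k)=-\sign(h_k)$ and $|v_k|\ge\delta$. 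For those saturating indices, $D_{kk}v_k$ blows up with a definite sign. The non-saturating ones stay bounded, so $\bD\bv=\bD_{\rm sat}\bv+O(1)$.

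I then avoid sign-tracking of $z_j$ directly and use the scaled quantity $s\bz$. Either $s=1$, in which case $|z_j|\le u_{\max}-k_p\sigma_j\cdot\mathrm{sign}$ is bounded, or $s=s_\ell$ for some $\ell$, in which case $|sz_\ell|=u_{\max}+\sign(z_\ell)k_p\sigma_\ell$. In the second case the vector $s\bz=-sk_d\bJ\bD\bv$ is a positive multiple of $\bJ\bD\bv$ whose largest components are controlled. I would show that the normalized direction $\bJ\bD\bv/\|\bD\bv\|$ converges to $\bJ\mathbf{w}$, where $\mathbf{w}$ is supported on the saturating indices and has signs opposite to $h$. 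For a single saturating index $j$, $\bJ\mathbf{w}$ is proportional to the column $\bJ\mathbf{e}_j$. The ratio $|z_k/z_j|\to|J_{kj}/J_{jj}|$ then bounds how large $s$ can be made by other rows, namely $s|z_j|\ge(u_{\max}-k_p)/\sqrt{R_{\max}}$ in the worst case. The condition $k_p<u_{\max}/\sqrt{1+R_{\max}}$, i.e.\ $k_p^2 R_{\max}<u_{\max}^2-k_p^2$, is what should guarantee $s|z_j|>k_p\ge k_p|\sigma_j|$ in the limit, so that the damping term dominates and $\sign(u_j)=-\sign(h_j)$.

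If several wheels saturate simultaneously, I would first try to reduce to the single-index argument by looking at the index with the largest $D_{kk}|v_k|$ and using the same ratio estimate. Otherwise I would use Gershgorin/Cauchy--Schwarz on the saturating rows of $\bJ$, paired with $\sum_k$, to get the $1+R_{\max}$ bound. Once $u_j<0$ on a left-neighbourhood of $T$, $h_j$ is decreasing there, which contradicts $h_j(T)=h_{\max}>h_j(t)$. So no such $T$ exists, solutions are global, and both bounds hold for all $t\ge0$.
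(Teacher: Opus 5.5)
Your torque argument is correct and matches the paper's. The gap is in the decisive step of the momentum part, the case $s=s_\ell$ with $\ell\neq j$.

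There you bound the two $\bsig$-terms separately by their worst cases, $s|z_\ell|\ge u_{\max}-k_p$ and $k_p\sigma_j\le k_p$. This yields $s|z_j|\ge r(u_{\max}-k_p)+o(1)$ with $r=J_{jj}/|J_{\ell j}|\ge 1/\sqrt{R_{\max}}$, and you then assert that \eqref{eq:kp_bound} makes this exceed $k_p$. It does not. For $r=1/\sqrt{R_{\max}}$ you need $k_p\sqrt{R_{\max}}<u_{\max}-k_p$, i.e.\ $k_p(1+\sqrt{R_{\max}})<u_{\max}$. The hypothesis only gives $k_p\sqrt{R_{\max}}<\sqrt{u_{\max}^2-k_p^2}$, which is strictly weaker whenever $R_{\max}>0$; your ``i.e.'' silently replaces $u_{\max}^2-k_p^2$ by $(u_{\max}-k_p)^2$.

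A concrete instance: take $R_{\max}=1$, $r=1$, $k_p=0.6\,u_{\max}$ (admissible, since $0.6<1/\sqrt2$), $\sigma_j=0$, and $\sign(z_\ell)\sigma_\ell$ close to $-1$. Then $s=s_\ell$ and $s|z_j|\approx 0.4\,u_{\max}<k_p$, so the inequality you claim is false. Here $u_j<0$ still holds, but only because $\sigma_j=0$, which your estimate throws away.

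The missing idea is that $\sigma_j$ and $\sigma_\ell$ are components of the same vector with $\|\bsig\|\le 1$, so they cannot both be extreme. The paper keeps them together. Inserting $s_\ell$ and the limits $z_j/|z_\ell|\to r$, $\sign(z_\ell)\to\sign(J_{\ell j})$ gives $u_j=k_p\bigl(\sigma_j-\sign(J_{\ell j})\,r\,\sigma_\ell\bigr)-r\,u_{\max}+o(1)$. Cauchy--Schwarz bounds the first term by $k_p\sqrt{1+r^2}\le k_p\,r\sqrt{1+R_{\max}}$ (using $r\ge 1/\sqrt{R_{\max}}$). Hence $u_j\le -r\bigl(u_{\max}-k_p\sqrt{1+R_{\max}}\bigr)+o(1)<0$, and this is exactly where the $\sqrt{1+R_{\max}}$ in \eqref{eq:kp_bound} comes from.

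You flag simultaneous saturation as the main obstacle and leave it open (Gershgorin, largest $D_{kk}|v_k|$). The paper does not resolve it either: it simply lets $h_1$ alone reach the bound at $T$. So the step you actually need to repair is the single-wheel estimate above. The rest of your outline matches the paper: the first exit time, the sign of $(\bJ\bomega)_j$, $z_j\to+\infty$, the exclusion of $s=1$, and the final monotonicity contradiction.
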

\begin{proof}
  \textit{Torque.} If $z_i=0$, then
  $|u_i|=k_p|\sigma_i|\leq k_p<u_{\max}$ by \eqref{eq:kp_bound}.  If $z_i\neq 0$, then $0<s\leq s_i$ and \eqref{eq:si} give $0<s|z_i|\leq u_{\max}+\sign(z_i)k_p\sigma_i$, so $u_i=k_p\sigma_i-s z_i$ lies between $k_p\sigma_i$ and $k_p\sigma_i-\sign(z_i)\left(u_{\max}+\sign(z_i)k_p\sigma_i\right) =-\sign(z_i)u_{\max}$, both between $\pm u_{\max}$, so $|u_i|\leq u_{\max}$.

  \textit{Angular momentum.}
  We prove $h_1(t)<h_{\max}$ for all $t\geq 0$. Arguments for $h_1>-h_{\max}$ and $i\neq 1$ are symmetric.  Suppose, for contradiction, that a wheel momentum reaches its bound in finite time; let $T$ be the first such time and let $h_1$ alone attain it, so that near $T$ the entries $D_{22},D_{33}$ remain bounded, as does $\bomega$ (since $\dot V\leq 0$).  Since $|H_1|\leq H_0<h_{\max}$, by continuity $h_1>\tfrac{1}{2}(h_{\max}+H_0)$ for $t$ close to $T$, and~\eqref{eq:conservation} gives $(\bJ\bomega)_1 = H_1-h_1 < -\tfrac{1}{2}(h_{\max}-H_0)<0$. As $h_1\to h_{\max}$, $D_{11}=|h_{\max}^2-h_1^2|^{-1}\to+\infty$, so expanding \eqref{eq:w} componentwise,
  \begin{equation}
    z_i = -k_d J_{i1}D_{11}(\bJ\bomega)_1 + \rho_i,\quad
    \rho_i \triangleq -k_d\!\!\sum_{k=2,3}\!J_{ik}D_{kk}(\bJ\bomega)_k,
    \label{eq:z_expand}
  \end{equation}
  with every $\rho_i$ bounded: thus $z_1\to+\infty$ and $z_i/z_1\to J_{i1}/J_{11}$ for $i\neq 1$. We now use this to show that, when $h_1\to h_{\max}$, $u_1=\dot h_1<0$.  At each instant, one of three cases holds.

  \emph{Case~1} ($s=1$): $u_1 = k_p\sigma_1 - z_1\to-\infty$, since $k_p|\sigma_1|\leq k_p$ while $z_1\to+\infty$.

  \emph{Case~2} ($s=s_1<1$): since $z_1>0$ near $T$, $u_1 = k_p\sigma_1 - \bigl(u_{\max}+\sign(z_1)k_p\sigma_1\bigr)z_1/|z_1| = -u_{\max}$: the wheel is driven away from the bound at the maximal rate.

  \emph{Case~3} ($s=s_2<1$, taking $j=2$ as binding axis without loss of generality): note first that $J_{12}\neq 0$ necessarily, for otherwise \eqref{eq:z_expand} leaves $z_2=\rho_2$ bounded and $s_2$ bounded below, while $s_1\to 0$, so $s_2$ could not be the minimum near $T$.  By the limits above, $z_1/|z_2|\to r\triangleq J_{11}/|J_{12}|$ and $\sign(z_2)\to\sign(J_{12})$, so
  \begin{align}
    u_1 &= k_p\sigma_1
           - \frac{u_{\max}+\sign(z_2)k_p\sigma_2}{|z_2|}\,z_1 \notag\\
        &= k_p\!\left(\sigma_1-\sign(J_{12})\,r\sigma_2\right)
           - r\,u_{\max} + o(1),
    \label{eq:u1case3}
  \end{align}
  with $o(1)\to 0$ as $t\to T$.  The Cauchy--Schwarz inequality and $\|\bsig\|\leq 1$ give $|k_p(\sigma_1-\sign(J_{12})\,r\sigma_2)|\leq k_p\sqrt{1+r^2}$. Moreover, $|J_{12}|\leq J_{11}\sqrt{R_{\max}}$ by \eqref{eq:kp_bound}, so $r\geq 1/\sqrt{R_{\max}}$, $1+r^2\leq r^2(1+R_{\max})$, and $k_p\sqrt{1+r^2}\leq k_p r\sqrt{1+R_{\max}}$; hence $u_1\leq -r\left(u_{\max}-k_p\sqrt{1+R_{\max}}\right)+o(1)<0$ by~\eqref{eq:kp_bound}.  For a different binding axis $j$, replace $J_{12}$ by $J_{1j}$ and $r$ by $r_j=J_{11}/|J_{1j}|\geq 1/\sqrt{R_{\max}}$.

  In all three cases $u_1$ is negative and bounded away from zero as $t\to T$, no matter how the cases alternate; integrating $\dot h_1=u_1$ over $[T-\delta,T]$ with $\delta$ small enough then gives $h_1(T)<h_1(T-\delta)<h_{\max}$, contradicting $h_1(T)=h_{\max}$.  Hence $h_1(t)<h_{\max}$ for all $t\geq 0$.
\end{proof}

\begin{remark}[Diagonal inertia simplification]
  When $\bJ=\diag(J_1,J_2,J_3)$, the matrix $\bJ\bD\bJ$ is diagonal with entries $J_i^2/|h_{\max}^2-h_i^2|$, so the momentum feedback simplifies to $z_i = -k_d J_i^2\omega_i/|h_{\max}^2-h_i^2|$. Furthermore, since the axes decouple, $k_d$ and $s$ may be replaced by per-axis gains $k_{d,i}>0$ and saturations $s_i$, and the bound on $k_p$ relaxes to $k_p<u_{\max}$.
\end{remark}

\begin{remark}[Initial conditions outside the safe set]
  Theorems~\ref{thm:stability} and~\ref{thm:safety} remain valid even if $|h_i(0)|>h_{\max}$ for some $i$: the absolute value in \eqref{eq:D} keeps the corresponding entry of $\bD$ positive, so $\bz$ still drives the state towards the equilibrium, eventually pushing $h_i$ back within the safe bounds.
\end{remark}

\section{Extension to Attitude Tracking}
\label{sec:tracking}

We now extend the controller to track a desired trajectory $(\bsig_d(t),\bomega_d(t))$ while still enforcing both hard constraints, by designing the same type of law in error coordinates.

\subsection{Error Coordinates and Dynamics}

Let $\bsig_d(t)$, $\bomega_d(t)$ be the desired attitude and angular velocity.  The error MRP $\bsig_e$ represents the rotation of the current frame relative to the desired frame, with $\bomega_e = \bomega - \bR(\bsig_e)\bomega_d$ the body-frame velocity error.  Error kinematics take the standard form $\dot{\bsig}_e = C(\bsig_e)\bomega_e$. Differentiating $\bomega_e$ and using~\eqref{eq:euler}:
\begin{equation}
  \bJ\dot{\bomega}_e = \underbrace{-\bomega\!\times\!(\bJ\bomega+\bh)
  - \bJ\!\left(\dot\bR(\bsig_e)\bomega_d
  + \bR(\bsig_e)\dot\bomega_d\right)}_{\displaystyle\bxi}
  - \bu,
  \label{eq:err_dyn}
\end{equation}
where $\bxi$ collects all computable feedforward terms.

\subsection{Lyapunov Function and Tracking Control Law}

We use, in error coordinates, the same Lyapunov function as in \eqref{eq:lyap_reg}, $V_e \triangleq 2k_p\log(1+\|\bsig_e\|^2) + \tfrac{1}{2}\bomega_e^T\bJ\bomega_e$, where \eqref{eq:kp_bound} still holds for $k_p$. Computing its time derivative along \eqref{eq:err_dyn} exactly as in \eqref{eq:Vdot} (with the MRP identity applied to $\dot\bsig_e=C(\bsig_e)\bomega_e$):
\begin{equation}
  \dot{V}_e = \bomega_e^T(k_p\bsig_e + \bxi - \bu).
  \label{eq:Vdot_track}
\end{equation}

Note the extra term $\bxi$: in regulation the gyroscopic part cancels in $\dot V$ via $\bomega^T(\bomega\times\cdot)=0$, but here $\bomega_e\neq\bomega$ in general, so it must be cancelled explicitly together with the reference feedforward.

Define the desired wheel momentum at tracking equilibrium ($\bomega_e=\bzero$) as $\bh_d \triangleq \bH - \bJ\bR(\bsig_e)\bomega_d$. From~\eqref{eq:conservation}:
\begin{equation}
  \bh - \bh_d = \bH - \bJ\bomega - \bh_d = -\bJ\bomega_e,
  \label{eq:h_err}
\end{equation}
which vanishes at $\bomega_e=\bzero$.  Define also $\bD$ as in~\eqref{eq:D} (using the actual $\bh$), the tracking momentum feedback
\begin{equation}
  \bz_e \triangleq -k_d\,\bJ\bD\bJ\,\bomega_e, \quad k_d>0,
  \label{eq:we}
\end{equation}
and, for each $i\in\{1,2,3\}$ (with $s_{e,i}\triangleq+\infty$ when $z_{e,i}=0$, as in Remark~\ref{rem:s}),
\begin{equation}
  s_{e,i} \triangleq
  \frac{u_{\max} + \sign(z_{e,i})(k_p\sigma_{e,i}+\xi_i)}{|z_{e,i}|},
  \;\;
  s_e \triangleq \min_i\,(1,\,s_{e,i}).
  \label{eq:se}
\end{equation}
The proposed control law for tracking is therefore
\begin{equation}
  \bu = k_p\bsig_e + \bxi - s_e\,\bz_e.
  \label{eq:ctrl_track}
\end{equation}
Define the parameter-dependent threshold
\begin{equation}
  \xi^* \triangleq \frac{u_{\max} - k_p\sqrt{1+R_{\max}}}{1+\sqrt{R_{\max}}}.
  \label{eq:xi_star}
\end{equation}
Condition~\eqref{eq:kp_bound} guarantees $\xi^*>0$ and $k_p + \xi^*\leq u_{\max}$ (equality only when $R_{\max}=0$).

The following explicit, physically interpretable conditions are sufficient (not necessary, hence conservative) for stability and safety; $\lambda_{\max}(\cdot)$ and $\lambda_{\min}(\cdot)$ denote extreme eigenvalues.

\begin{proposition}[Tracking stability and safety]
  \label{prop:tracking}
  Consider
  \begin{enumerate}
    \item[(A2)] $\displaystyle\sup_{t\geq 0}\|\bomega_d(t)\|
      < \dfrac{h_{\max}-H_0}{\lambda_{\max}(\bJ)}$,
    \item[(A3)] $\|\bxi(t)\|_\infty < \xi^*$ for all $t\geq 0$,
  \end{enumerate}
  and model~\eqref{eq:kinematics}--\eqref{eq:wheels} under control~\eqref{eq:ctrl_track} with coefficients $k_d>0$ and $k_p$ verifying \eqref{eq:kp_bound}. Then, under assumptions (A1), (A2) and (A3), the system is asymptotically stable at $(\bsig_e,\bomega_e)=(\bzero,\bzero)$, and satisfies $|u_i(t)|\leq u_{\max}$, $|h_i(t)|<h_{\max}$ $\forall t\geq 0$.
\end{proposition}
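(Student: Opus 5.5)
The plan is to treat Proposition~\ref{prop:tracking} as two parts. For safety, I would reproduce the proof of Theorem~\ref{thm:safety}, with $k_p\sigma_i$ replaced by $k_p\sigma_{e,i}+\xi_i$ and $\bH$ replaced by $\bh_d$. For stability, I would use the same Lyapunov computation as in Theorem~\ref{thm:stability}, but close it with a Barbalat-type argument, because the error system is time-varying.

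\emph{Torque bound.} Assumption (A3) and $k_p+\xi^*\le u_{\max}$ give, for the numerator of \eqref{eq:se},
\[
u_{\max}+\sign(z_{e,i})(k_p\sigma_{e,i}+\xi_i)\ge u_{\max}-k_p-\xi^*\ge 0,
\]
with strict inequality thanks to the strict inequality in (A3). Hence $s_e\in(0,1]$. The proof of Theorem~\ref{thm:safety} then applies verbatim: $u_i$ lies between $k_p\sigma_{e,i}+\xi_i$ (of modulus $<u_{\max}$) and $-\sign(z_{e,i})u_{\max}$, so $|u_i|\le u_{\max}$.

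\emph{Momentum bound.} I would first note that (A1)--(A2) give
\[
\|\bh_d\|_\infty\le H_0+\lambda_{\max}(\bJ)\sup_t\|\bomega_d\|\triangleq \bar h<h_{\max}.
\]
Arguing by contradiction at a first time $T$ with $h_1(T)=h_{\max}$, identity \eqref{eq:h_err} gives $(\bJ\bomega_e)_1=h_{d,1}-h_1<-\tfrac12(h_{\max}-\bar h)$ near $T$. Hence $z_{e,1}\to+\infty$ and $z_{e,i}/z_{e,1}\to J_{i1}/J_{11}$, exactly as in \eqref{eq:z_expand}. For this I need $\bomega_e$ bounded on $[0,T)$, which follows from $\dot V_e\le 0$ below. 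The three cases then go as follows.
\begin{itemize}
\item Case~1 ($s_e=1$): $u_1=k_p\sigma_{e,1}+\xi_1-z_{e,1}\to-\infty$, since $\xi$ is bounded by (A3).
\item Case~2 ($s_e=s_{e,1}$): $u_1=-u_{\max}$.
\item Case~3 ($s_e=s_{e,j}$): with $r_j=J_{11}/|J_{1j}|$, I expect
\[
u_1\le k_p\sqrt{1+r_j^2}+\xi^*_{\rm eff}(1+r_j)-r_ju_{\max}+o(1),
\]
where $\xi^*_{\rm eff}=\sup\|\bxi\|_\infty<\xi^*$. Using $1+r_j\le r_j(1+\sqrt{R_{\max}})$ and $\sqrt{1+r_j^2}\le r_j\sqrt{1+R_{\max}}$, this becomes
\[
u_1\le -r_j\bigl[u_{\max}-k_p\sqrt{1+R_{\max}}-(1+\sqrt{R_{\max}})\xi^*_{\rm eff}\bigr]+o(1)<0.
\]
This computation is precisely where the definition \eqref{eq:xi_star} of $\xi^*$ comes from. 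Strictness requires the supremum in (A3) to be strictly below $\xi^*$, which I would assume or obtain by a uniform-margin reading of (A3).
\end{itemize}
In all cases $u_1$ is negative and bounded away from zero near $T$, which gives the same integration contradiction as in Theorem~\ref{thm:safety}.

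\emph{Stability.} Substituting \eqref{eq:ctrl_track} into \eqref{eq:Vdot_track} gives $\dot V_e=-s_ek_d\,\bomega_e^T\bJ\bD\bJ\bomega_e\le 0$. Therefore $V_e$ is nonincreasing, and Lyapunov stability of the origin follows from $V_e$ being positive definite and time-independent. For attractivity, I would restrict to a neighbourhood of the origin small enough that $\|\bJ\bomega_e\|_\infty<\tfrac12(h_{\max}-\bar h)$. By \eqref{eq:h_err} this keeps $|h_i|$ uniformly away from $h_{\max}$, so $\bD$ is uniformly bounded above and below. In the same neighbourhood $z_e$ is small, so $s_e=1$. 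Then $\dot V_e\le -c\|\bomega_e\|^2$, and integrability gives $\bomega_e\in L_2$.

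To apply Barbalat's lemma I need $\dot\bomega_e$ bounded. Since $\bJ\dot\bomega_e=-k_p\bsig_e+s_e\bz_e$ is bounded, this holds, and hence $\bomega_e\to\bzero$. To get $\bsig_e\to\bzero$ I would apply Barbalat a second time to $\dot\bomega_e$. That requires uniform continuity of $-k_p\bsig_e+\bz_e$, which needs $\dot\bh$, $\dot\bsig_e$ and $\ddot\bomega_e$-type terms bounded; these follow because the reference and the states are bounded.

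\emph{Expected main obstacle.} The hardest step is this last uniform-continuity/Barbalat argument, together with guaranteeing that the neighbourhood where $\bD$ is bounded and $s_e=1$ is positively invariant. This is exactly why the result is only local rather than GAS. Invariance would come from choosing a sublevel set of $V_e$ inside that neighbourhood. If regularity of $\bxi$ is an issue, I would additionally assume $\dot\bomega_d$ is bounded and uniformly continuous.
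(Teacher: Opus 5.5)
Your proposal is correct. The safety half (positivity of $s_e$, the torque sandwich, and the first-hitting-time contradiction with three cases) follows the paper's proof closely. The genuine differences are in the stability step and in how Case~3 is closed.

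\textbf{Stability.} The paper applies LaSalle's invariance principle directly on $\{\bomega_e=\bzero\}$ and asserts convergence of every trajectory. However, the closed-loop error system $\bJ\dot\bomega_e=-k_p\bsig_e+s_e\bz_e$ is time-varying:
\begin{itemize}
\item $\bD$ depends on $\bh=\bH-\bJ\bomega_e-\bJ\bR(\bsig_e)\bomega_d$, where $\bH=\bR(\bsig)\bH_0$ depends on $\bsig_d(t)$;
\item $s_e$ depends on $\bxi(t)$.
\end{itemize}
So the autonomous invariance principle does not apply as stated. Your Barbalat route handles this time dependence rigorously. The price is that you only get local attractivity, which is exactly what the proposition claims. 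Your neighbourhood claims check out on a small sublevel set of $V_e$:
\begin{itemize}
\item $|h_i|$ stays uniformly below $h_{\max}$ by \eqref{eq:h_err} and \eqref{eq:hd_bound}, so $\bD$ is bounded there.
\item $s_e=1$ there, because each numerator of $s_{e,i}$ exceeds $u_{\max}-\xi^*-k_p\|\bsig_e\|\geq k_p(1-\|\bsig_e\|)$.
\end{itemize}
Your fallback regularity assumption on $\dot\bomega_d$ is unnecessary. $\bxi$ cancels in $\bJ\dot\bomega_e$, and $|\dot h_i|=|u_i|\leq u_{\max}$ makes $\bD$ Lipschitz in time on that set. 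Hence $\bsig_e$, $\bz_e$ and $\dot\bomega_e$ are uniformly continuous, and the second Barbalat application goes through.

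\textbf{Case~3.} Your inequalities $1+r_j\leq r_j(1+\sqrt{R_{\max}})$ and $\sqrt{1+r_j^2}\leq r_j\sqrt{1+R_{\max}}$ prove the paper's claim \eqref{eq:xistar_claim} in one line. The paper instead shows that $g(r)$ is increasing with $g(1/\sqrt{R_{\max}})=\xi^*$. Your version also makes transparent why $\xi^*$ takes the form \eqref{eq:xi_star}.

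\textbf{The margin in (A3).} You do not need to strengthen (A3) to a uniform margin over all time. As the paper does, continuity of $\bxi$ on the compact interval $[T-\delta,T]$, together with pointwise (A3) up to $T$, gives $\|\bxi\|_\infty\leq\xi^*-\mu$ there. That local margin is all that is needed to dominate the $o(1)$ term near $T$.
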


\begin{proof}
  \textit{Step~1 (positivity of $s_e$).}
  From (A3) and $k_p|\sigma_{e,i}|\leq k_p$: $|k_p\sigma_{e,i}+\xi_i|\leq k_p+|\xi_i| < k_p+\xi^*\leq u_{\max}$, where the last inequality comes from~\eqref{eq:xi_star}. Hence $s_{e,i}>0$, so $s_e\in(0,1]$.

  \textit{Step~2 (asymptotic stability).}
  Substituting control~\eqref{eq:ctrl_track} into~\eqref{eq:Vdot_track}, the terms $k_p\bsig_e$ and $\bxi$ cancel and only the scaled damping term survives: $\dot{V}_e = \bomega_e^T(k_p\bsig_e+\bxi-k_p\bsig_e-\bxi+s_e\bz_e) = s_e\,\bomega_e^T\bz_e$. Since $\bJ\bD\bJ\succ 0$, $s_e>0$, $k_d>0$, by \eqref{eq:we}:
  \begin{equation}
    \dot{V}_e = -s_e\,k_d\,\bomega_e^T\bJ\bD\bJ\,\bomega_e \leq 0,
    \label{eq:Vdot_track2}
  \end{equation}
  with equality for $\bomega_e=\bzero$. $V_e$ is radially unbounded, so all trajectories are bounded. Let $\mathcal{M}=\{(\bsig_e,\bomega_e):\bomega_e=\bzero\}$. On $\mathcal{M}$, $\bz_e=\bzero$ and~\eqref{eq:err_dyn} with $\dot\bomega_e=\bzero$ gives $\bzero=\bxi-\bu=-k_p\bsig_e$, so $\bsig_e=\bzero$.  The largest invariant subset of $\{\dot{V}_e=0\}$ is therefore $(\bzero,\bzero)$, and by LaSalle's invariance principle every trajectory converges to $(\bsig_e,\bomega_e)=(\bzero,\bzero)$.  Asymptotic stability follows from $\dot{V}_e\leq 0$ and positive definiteness of $V_e$.

  \textit{Step~3 (torque constraints).}
  If $z_{e,i}=0$, then $|u_i|=|k_p\sigma_{e,i}+\xi_i|<u_{\max}$ by Step~1.  If $z_{e,i}\neq 0$, then $0<s_e|z_{e,i}|\leq s_{e,i}|z_{e,i}|=u_{\max}+\sign(z_{e,i})(k_p\sigma_{e,i}+\xi_i)$, so, exactly as in the torque part of Theorem~\ref{thm:safety} with $k_p\sigma_i$ replaced by $k_p\sigma_{e,i}+\xi_i$, $u_i$ lies between $k_p\sigma_{e,i}+\xi_i$ and $-\sign(z_{e,i})u_{\max}$, hence $|u_i|\leq u_{\max}$.

  \textit{Step~4 (angular momentum constraints).}
  We prove $h_1(t)<h_{\max}$ $\forall t\geq 0$; the arguments for $h_1>-h_{\max}$ and $i\neq 1$ are symmetric. Proof similar to that of Theorem~\ref{thm:safety}.

  First we bound the desired wheel momentum $\bh_d=\bH-\bJ\bR(\bsig_e)\bomega_d$.  Since $\|\bH\|=H_0$ by \eqref{eq:conservation}, $\|\bR(\bsig_e)\bomega_d\|=\|\bomega_d\|$, and $\|\bJ\bv\|\leq\lambda_{\max}(\bJ)\|\bv\|$, the triangle inequality gives
  \begin{align}
    |h_{d,1}|&\leq\|\bH - \bJ\bR(\bsig_e)\bomega_d\|
    \leq H_0+\lambda_{\max}(\bJ)\|\bomega_d\| \notag\\
    &\leq \bar h_d \triangleq
    H_0 + \lambda_{\max}(\bJ)\textstyle\sup_{t\geq0}\|\bomega_d\|
    \;<\; h_{\max},
    \label{eq:hd_bound}
  \end{align}
  where the final strict inequality is precisely (A2).

  Suppose now, for contradiction, that a wheel momentum reaches its bound in finite time; let $T$ be the first such time and let, as in Theorem~\ref{thm:safety}, $h_1$ alone attain the bound at $T$, so that $D_{22},D_{33}$ are bounded on $[T-\delta,T]$.  Recall that $\dot V_e\leq 0$ bounds $\bomega_e$.  By \eqref{eq:hd_bound} and continuity, shrinking $\delta$ if necessary, for all $t\in[T-\delta,T)$, from~\eqref{eq:h_err}, $(\bJ\bomega_e)_1=h_{d,1}-h_1\leq \bar h_d-h_1 \leq -c_e\triangleq -\tfrac{1}{2}(h_{\max}-\bar h_d)<0$. Repeating \eqref{eq:z_expand} and the subsequent limits with $\bomega_e$ in place of $\bomega$ yields $z_{e,1}\to+\infty$ and $z_{e,i}/z_{e,1}\to J_{i1}/J_{11}$ as $t\to T$.  As in Theorem~\ref{thm:safety}, it suffices to show that $u_1(t)\leq-\epsilon<0$ for all $t$ close enough to $T$; the same integral argument then gives the contradiction.  Again, at each instant one of three cases holds.

  \emph{Case~1} ($s_e=1$):
  $u_1 = (k_p\sigma_{e,1}+\xi_1) - z_{e,1}\to -\infty$, since $|k_p\sigma_{e,1}+\xi_1|<u_{\max}$ by Step~1 while $z_{e,1}\to+\infty$.

  \emph{Case~2} ($s_e=s_{e,1}<1$): since $z_{e,1}>0$ near $T$, the numerator of $s_{e,1}$ cancels the driving term ($\xi_1$ included) exactly as in Case~2 of Theorem~\ref{thm:safety}, giving $u_1=-u_{\max}<0$.

  \emph{Case~3} ($s_e=s_{e,j}<1$ for some $j\neq 1$): as in
  Theorem~\ref{thm:safety}, $J_{1j}\neq 0$, and $z_{e,1}/|z_{e,j}|\to r_j\triangleq J_{11}/|J_{1j}|\geq 1/\sqrt{R_{\max}}$ and $\sign(z_{e,j})\to\sign(J_{1j})$ as $t\to T$. Hence
  \begin{equation}
    u_1 = (k_p\sigma_{e,1}+\xi_1) - s_{e,j}\,z_{e,1}
        = \Delta_j - r_j\,u_{\max} + o(1),
    \label{eq:u1_case3_track}
  \end{equation}
  with $\Delta_j\triangleq(k_p\sigma_{e,1}+\xi_1) -\sign(J_{1j})\,r_j(k_p\sigma_{e,j}+\xi_j)$ and $o(1)\to 0$ as $t\to T$.  The Cauchy--Schwarz and triangle inequalities, with $\|\bsig_e\|\leq 1$, give
  \begin{equation}
    |\Delta_j| \leq k_p\sqrt{1+r_j^2}
              + \|\bxi\|_\infty(1+r_j).
    \label{eq:Delta_bound}
  \end{equation}
  We claim that
  \begin{equation}
    \xi^*(1+r)\leq r\,u_{\max}-k_p\sqrt{1+r^2}
    \quad\text{for all } r\geq 1/\sqrt{R_{\max}}.
    \label{eq:xistar_claim}
  \end{equation}
  To see this, consider $g(r)=(r\,u_{\max}-k_p\sqrt{1+r^2})/(1+r)$.  Its derivative is $g'(r)=\bigl[u_{\max}+k_p(1-r)/\sqrt{1+r^2}\bigr]/(1+r)^2$, whose numerator satisfies $u_{\max}+k_p(1-r)/\sqrt{1+r^2}\geq u_{\max}-k_p r/\sqrt{1+r^2} >u_{\max}-k_p>0$ (using $k_p<u_{\max}$), so $g$ is strictly increasing.  Direct substitution of \eqref{eq:xi_star} shows $g(1/\sqrt{R_{\max}})=\xi^*$, which proves the claim~\eqref{eq:xistar_claim}.  By (A3) and continuity, there exists $\mu>0$ such that $\|\bxi(t)\|_\infty\leq\xi^*-\mu$ on the compact interval $[T-\delta,T]$; combining \eqref{eq:Delta_bound} and \eqref{eq:xistar_claim},
  \begin{equation}
    |\Delta_j| \leq k_p\sqrt{1+r_j^2}+(\xi^*-\mu)(1+r_j)
             \leq r_j\,u_{\max}-\mu(1+r_j),
    \label{eq:Delta_final}
  \end{equation}
  hence $u_1\leq-\mu(1+r_j)+o(1)<0$ for $t$ close enough to $T$.

  As in Theorem~\ref{thm:safety}, the finitely many cases each bound $u_1$ negatively, uniformly in $t$ near $T$, which completes the contradiction.  Hence $h_1(t)<h_{\max}$ for all $t\geq 0$.
\end{proof}

Proposition~\ref{prop:tracking} is \emph{conditional} on (A2)--(A3), and thus not GAS: $\bxi$ contains the gyroscopic term $-\bomega\times(\bJ\bomega+\bh)$, which grows with $\|\bomega_e\|$, so a large initial error or an aggressive reference can push $\|\bxi\|_\infty$ beyond $\xi^*$, violating (A3).

(A3) is checkable a priori, as the two terms of $\bxi$ in \eqref{eq:err_dyn} admit explicit bounds.  First, since $\dot{V}_e\leq 0$, the definition of $V_e$ gives $\tfrac{1}{2}\lambda_{\min}(\bJ)\|\bomega_e(t)\|^2\leq V_e(t)\leq V_e(0)$, hence $\|\bomega_e(t)\|\leq\bar\omega_e\triangleq\sqrt{2V_e(0)/\lambda_{\min}(\bJ)}$ and $\|\bomega\|\leq\|\bomega_e\|+\|\bR(\bsig_e)\bomega_d\|\leq \bar\omega_e+\|\bomega_d\|$ for all $t\geq 0$.  By the conservation law~\eqref{eq:conservation} the gyroscopic term is simply $-\bomega\times(\bJ\bomega+\bh)=-\bomega\times\bH$ with $\|\bH\|=H_0$, so its norm is at most $(\bar\omega_e+\|\bomega_d\|)H_0$.  Second, using $\dot\bR(\bsig_e)=-\bomega_e^{\times}\bR(\bsig_e)$, $\|\dot\bR(\bsig_e)\bomega_d\|=\|\bomega_e\times\bR(\bsig_e)\bomega_d\| \leq\bar\omega_e\|\bomega_d\|$, so the reference feedforward term of \eqref{eq:err_dyn} is bounded by $\lambda_{\max}(\bJ)\bigl(\|\dot\bomega_d\|+\bar\omega_e\|\bomega_d\|\bigr)$. Since $\|\cdot\|_\infty\leq\|\cdot\|$, combining both bounds yields
\begin{align}
  \|\bxi(t)\|_\infty \leq{}& \left(\bar\omega_e+\|\bomega_d(t)\|\right)H_0
  \notag\\
  &+\lambda_{\max}(\bJ)\left(\|\dot\bomega_d(t)\|
  +\bar\omega_e\|\bomega_d(t)\|\right);
  \label{eq:xi_apriori}
\end{align}
verifying a-priori that this bound stays below $\xi^*$ (true for slow references and small initial errors) certifies (A3).

In contrast, the regulation controller (Theorems~\ref{thm:stability} and~\ref{thm:safety}) is truly GAS: with $\bomega_d=\bzero$, $\bxi$ reduces to $-\bomega\times(\bJ\bomega+\bh)$, orthogonal to $\bomega$, vanishing identically in $\dot{V}$ and imposing no restriction on the initial condition.

\section{Simulations}
\label{sec:sim}

The proposed controller is now simulated and compared against a classical controller and the OD-CLF-CBF-QP~\cite{shahraki2025cbf}.

\subsection{Setup}

Simulations use the controller~\eqref{eq:D}--\eqref{eq:ctrl}. The inertia matrix is $\bJ = R^T\diag(15,\,10,\,8)\,R$~kg\,m$^2$, $R$ being a rotation by $30^{\circ}$ about $(1,1,1)^T/\!\sqrt{3}$, giving a non-diagonal $\bJ$. Limits are $h_{\max}=5$~N\,m\,s, $u_{\max}=0.3$~N\,m; unless otherwise noted, $k_p = 0.95\,u_{\max}/\sqrt{1+R_{\max}}$, $k_d=0.3$.  Integration uses MATLAB's \texttt{ode45} with event-triggered shadow-set switching.  The classical MRP controller $\bu_{\text{cl}} = k_p\bsig + k_d\bJ\bomega$~\cite{schaub2018analytical} is used as a baseline, with identical gains.

\subsection{Regulation: Safety Under Near-Saturated Conditions}

\begin{figure}[!t]
  \centering
  \IfFileExists{figures/sim_safety.pdf}%
    {\includegraphics[width=\columnwidth]{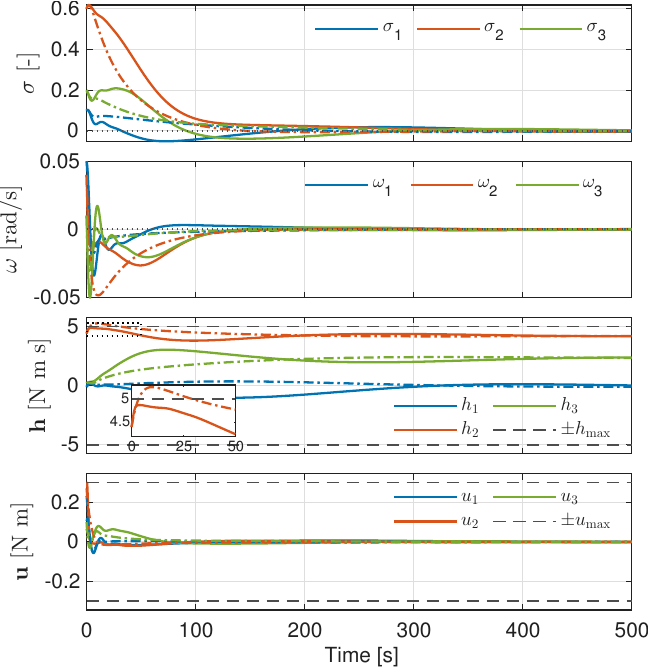}}%
    {\figplaceholder{sim\_safety: $h$ vs.\ time, proposed vs.\ classical, dashed $\pm h_{\max}$}}
  \caption{Safety stress test with $H_0 = 0.96\,h_{\max}$ ($\bsig_0 = (0.1,0.6, 0.2)^T$, $\bomega_0 = (0.05, 0.04, 0.01)^T$~rad/s, $\bh_0=(-0.1, 4.4, 0.2)^T$~N\,m\,s): proposed controller (solid) vs.\ classical MRP controller~\cite{schaub2018analytical} (dash-dot). Top to bottom: MRPs, angular velocity, wheel momenta (dashed $\pm h_{\max}$; inset: constraint violation of the classical law), wheel torque (dashed $\pm u_{\max}$).}
  \label{fig:safety}
\end{figure}

\begin{figure}[!t]
  \centering
  \IfFileExists{figures/sim_tracking.pdf}%
    {\includegraphics[width=\columnwidth]{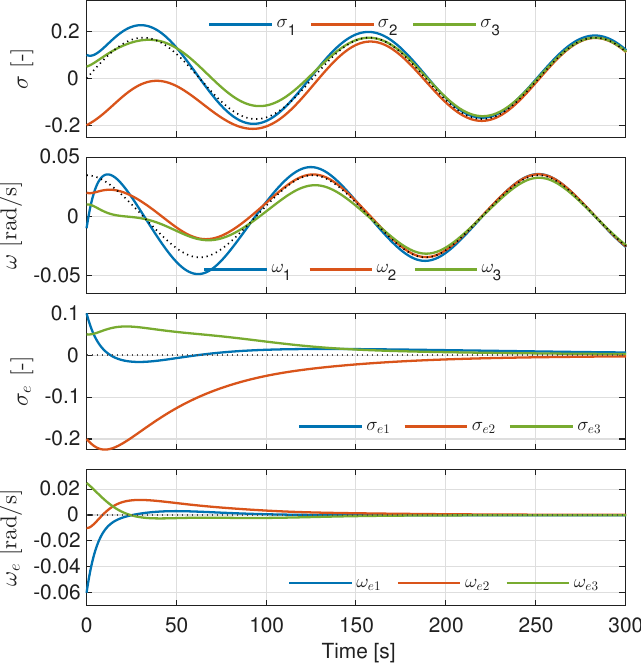}}%
    {\figplaceholder{sim\_tracking: error MRP, $\omega_e$, $h$ vs.\ time; $V_e$ monotone}}
  \caption{Slow sinusoidal tracking ($\bsig_d = 0.3\sin(0.05t)\,\mathbf{1}$, $\bomega_d$ from kinematics, $\bh_0 = \bzero$). Top to bottom: MRPs, angular velocity, MRPs error, angular velocity error.}
  \label{fig:tracking}
\end{figure}

Figure~\ref{fig:safety} stresses the momentum barrier ($H_0 = 0.96\,h_{\max}$).  The proposed controller keeps all wheels strictly inside the limits.  The classical MRP controller, with no momentum management, makes one wheel exceed $h_{\max}$ during the transient. The attitude, angular-velocity and torque histories are otherwise comparable, showing that the embedded momentum protection has little performance cost.

\subsection{Tracking: Sinusoidal Reference}

Figure~\ref{fig:tracking} validates Proposition~\ref{prop:tracking} on the slow sinusoidal reference of the caption, using $k_p = 0.27\,u_{\max}/\sqrt{1+R_{\max}}$ (smaller $k_p$ enlarges $\xi^*$, easing (A3)): $\|\bomega_d\|_{\max}\approx 0.06$~rad/s satisfies (A2) (whose bound is $(h_{\max}-H_0)/\lambda_{\max}(\bJ)\approx 0.32$~rad/s), and the a-priori bound~\eqref{eq:xi_apriori} evaluates to $\|\bxi\|_\infty\leq 0.161$~N\,m~$<\xi^*\approx 0.179$~N\,m, so (A3) holds.  The error state converges to zero, all momenta remain within $h_{\max}$, and $V_e$ decreases monotonically, consistent with Proposition~\ref{prop:tracking}.

\subsection{Comparison with CLF-CBF-QP}

\begin{figure}[!t]
  \centering
  \IfFileExists{figures/sim_qp.pdf}%
    {\includegraphics[width=\columnwidth]{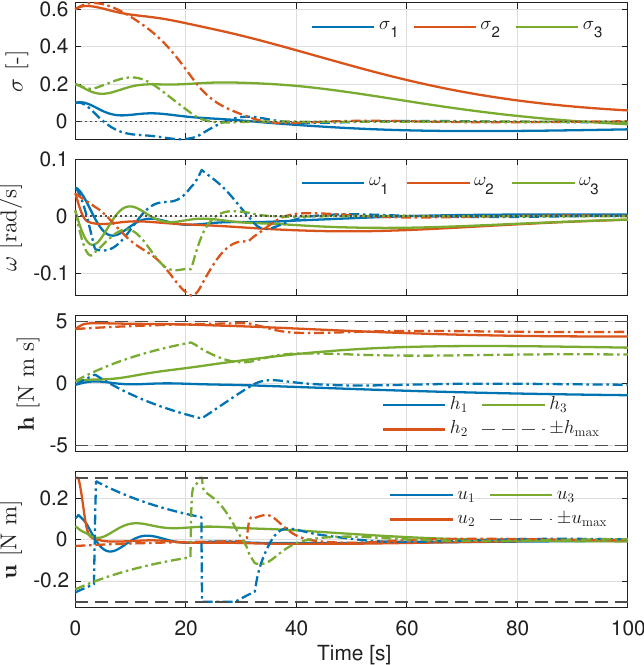}}%
    {\figplaceholder{sim\_qp: $h$ vs.\ time + computation time comparison (proposed vs.\ CLF-CBF-QP)}}
  \caption{Safety stress-test ($H_0=0.96h_{\max}$): proposed controller (solid) vs.\ OD-CLF-CBF-QP~\cite{shahraki2025cbf} (dash-dot). Panels as in Fig.~\ref{fig:safety} (without inset).}
  \label{fig:cbf}
\end{figure}

Figure~\ref{fig:cbf} compares the proposed controller against the OD-CLF-CBF-QP in \cite{shahraki2025cbf} under the same near-saturated conditions, with parameters $\alpha = 0.05$, $\nu = 10$, $p_\rho = 0.1$, $p_\delta = 100$, $\mathbf Q = \mathbf I_6$ and $\mathbf R = \nu \mathbf I_3$, as in \cite{shahraki2025cbf}. A sweep over the gains of both controllers ($k_p\in[0.25,0.95]\cdot u_{\max}/\sqrt{1+R_{\max}}$, $k_d\in[0.05,3]$; $\alpha\in[0.025,0.2]$, $\nu\in\{1,10,100\}$) shows the comparison does not hinge on tuning: every configuration respects both constraints, and all QP tunings settle faster (27--105~s vs.\ 234~s for the best explicit tuning, the nominal above) but with a control effort $\int\|\bu\|^2\mathrm{d}t$ at least $3\times$ larger. This is a structural trade-off stemming from~\eqref{eq:kp_bound}, which caps the explicit law's aggressiveness in exchange for analytic guarantees. OD-CLF-CBF-QP solves a QP at every step, with barriers $h_{\max}\pm h_i$ and a relaxed CLF constraint; it has weaker stability guarantees (convergence only monitored through its slack variable) and cost growing cubically with the number of variables, while the proposed explicit formula has fixed $\mathcal{O}(1)$ cost. Over 1000 simulations with random initial conditions, the average step time is 0.0037~ms for the proposed controller versus 0.73~ms for the QP, a $\sim$200x speedup that enables implementation on compute-limited hardware.

\section{Conclusion}
\label{sec:conclusion}

We have presented an explicit Lyapunov attitude controller that achieves GAS regulation while enforcing hard torque and wheel-momentum constraints, together with a tracking extension under explicit, a-priori checkable feasibility conditions. Simulations confirm constraint satisfaction and a $\sim$200x per-step speedup over an OD-CLF-CBF-QP baseline, at the cost of slower (though guaranteed) convergence.

Future work includes the inclusion of disturbances, other typical wheel restrictions such as avoiding the nonlinearities arising at zero-speed crossings, redundant wheels, and relaxed feasibility conditions for aggressive tracking. The presented law is also useful as a terminal law in Model Predictive Control~\cite{mayne2000mpc,hegre2005mpc}: inside a sublevel set $V\!\leq\!c$ it guarantees recursive feasibility and constraint satisfaction, shortening the horizon.

\section*{Acknowledgment}
\label{sec:acknowledgment}

J.~Urrios acknowledges an FPU grant from MICIU. The authors acknowledge support of grant PID2023-147623OB-I00 funded by MICIU/AEI/10.13039/501100011033 and by ``ERDF A way of making Europe''.

\bibliographystyle{IEEEtran}
\bibliography{refs}

\end{document}